\newtheorem{theorem}{Theorem}[section]
\newtheorem{remark}[theorem]{Remark}
\newtheorem{cor}[theorem]{Corollary}
\newtheorem{prop}[theorem]{Proposition}
\newtheorem{lem}[theorem]{Lemma}
\def \N {{\mathbb N}}
\def \Z {{\mathbb Z}}
\def \R {{\mathbb R}}
\def \M {{\mathbb M}}
\def \la {{\lambda}}
\author{Priyanka Grover \thanks{Department of Mathematics, Shiv Nadar Institution of Eminence Delhi NCR, Gautam Buddha Nagar, Uttar Pradesh, 201314, India. \textbf{Email}: priyanka.grover@snu.edu.in} , Veer Singh Panwar \thanks{Department of Mathematics, Govt. Degree College Satpuli, Sri Dev Suman Uttarakhand University, Uttarakhand, 246172, India. \textbf{Email}: veerstar5@gmail.com}}
\date{}
\begin{document}
\title{\textbf{Inertia and other properties of the matrix $\left[\beta(i,j )\right]$}} 
\maketitle
 \begin{center}
\large{\textbf{Abstract}} 
 \end{center}
\begin{footnotesize}

\noindent Let $\pi(A)$, $\xi(A)$ and $\nu(A)$, respectively, denote the number of positive, zero and negative eigenvalues of the matrix $A$. Then the triplet $(\pi(A), \xi(A), \nu(A))$ is called the \emph{inertia} of $A$ and is denoted by $\textup{Inertia(A)}$. Let $\beta$ be the beta function. The inertia of the matrix $\left[\beta(i,j )\right]$ is shown to be $\left(\frac{n}{2},0,\frac{n}{2}\right)$ if $n$ is even, and $\left(\frac{n+1}{2},0,\frac{n-1}{2}\right)$ if $n$ is odd. 
It is also shown that $\left[\beta(i,j)\right]$ is Birkhoff-James orthogonal to the $n\times n$ identity matrix $I$ in the trace norm if and only if $n$ is even. 
For $0<\la_1<\cdots<\la_n, 0<\mu_1<\cdots<\mu_n$, it is shown that the matrix $\left[(\beta(\la_i,\mu_j))^m\right]$ is non singular if $\mu_{i+1}-\mu_{i}\in \N$ for all $1\leq i \leq n-1$. It is also shown that if $\mu_{i+1}-\mu_i \in \N$ for $1\leq i\leq n-1$, then for $m\in \mathbb N$, the matrix $\left[\frac{1}{\beta(\la_i,\mu_j)^m}\right]$ is totally positive. 
\end{footnotesize}

\vspace{0.5 cm}

\textbf{AMS classification:} 05A10, 15A23, 15B36, 15B48.  

\begin{footnotesize}
\textbf{Keywords:} Hadamard powers, Totally positive matrices, Inertia, Beta matrix, Pascal matrix.
\end{footnotesize}

\vspace{1 cm}

\section{Introduction}
 Let $\beta(\cdot,\cdot)$ and $\Gamma(\cdot)$ denote the \emph{beta function} and  \emph{gamma function}, respectively. For $x,y \in \R^+$, $\beta(x,y)=\frac{\Gamma (x) \Gamma(y)}{\Gamma(x+y)}$.
 Let $\mathcal{B}=\left[\frac{1}{\beta(i,j)}\right]=\left[\frac{(i+j-1)!}{(i-1)!(j-1)!}\right]$ denote the \emph{beta matrix}~~\cite{grover2020positivity}. We shall assume that $1\leq i,j \leq n$, unless otherwise stated. 
 The matrix $\mathcal{B}$ is closely related to the \emph{Pascal matrix} $\mathcal{P}=\left[\frac{(i+j)!}{i!j!}\right]_{i,j=0}^{n-1}$.  For MATLAB toolbox for $\mathcal{B}$, see~\cite{higham2022anymatrix}. For numerical results and its usage in creating test matrices, see ~\cite{bujok2022computing, pena}. See \cite{karp} for results on ratios of fractional series related to the gamma kernels.
 
One of the main objectives of this paper is to study some interesting properties of a related matrix $\left[\beta(i,j)\right]$. This is the \emph{Hadamard} or \emph{Schur} inverse of 
$\mathcal{B}$, generally denoted as $\mathcal{B}^{\circ(-1)}$. Studying properties of Hadamard inverses of some well known matrices has found usage in~\cite{bhatia2016inertia,haukkanen2018inertia,prodinger2015reciprocal,richardson2014reciprocal}.

Let $\pi(A)$, $\xi(A)$ and $\nu(A)$, respectively, denote the number of positive, zero and negative eigenvalues of the matrix $A$. Then the triplet $(\pi(A), \xi(A), \nu(A))$ is called the \emph{inertia} of $A$ and is denoted by $\textup{Inertia(A)}$. Finding inertia of matrices is an important problem in matrix theory, see~\cite{bhatia2015inertia,bhatia2016inertia,dyn,haukkanen2018inertia,jain2017hadamard,jain2020hadamard}. Our main result is the inertia of the matrix $\left[\beta(i,j)\right]$.

\begin{theorem}\label{inertia of beta had powers}
We have 
\begin{align*} 
\textup{Inertia}\left(\left[\beta(i,j)\right]\right) = \begin{cases}
     \left(\dfrac{n}{2},0,\dfrac{n}{2}\right) \,\,  & \text{if} \,\, n \,\, \textup{is even},\\[0.5 cm]
     \left(\dfrac{n+1}{2},0,\dfrac{n-1}{2}\right) \,\, & \text{if} \,\, n \,\, \textup{is odd}.
    \end{cases}
   \end{align*}
\end{theorem}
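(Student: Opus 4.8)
The plan is to realize $M := \left[\beta(i,j)\right]$ as the matrix of a symmetric bilinear form on polynomials and to diagonalize that form using the symmetry $t \mapsto 1-t$. The starting point is the Euler integral
\[
\beta(i,j) = \int_0^1 t^{\,i-1}(1-t)^{\,j-1}\, dt, \qquad i,j \in \N .
\]
For a vector $x = (x_1,\dots,x_n)^T \in \R^n$, associate the polynomial $p(t) = \sum_{i=1}^n x_i t^{\,i-1}$ of degree at most $n-1$. Expanding the quadratic form and interchanging sum and integral gives
\[
x^T M x = \int_0^1 p(t)\, p(1-t)\, dt ,
\]
and more generally $x^T M y = \int_0^1 p(t)\, q(1-t)\, dt$ for the polynomials $p,q$ attached to $x,y$. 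Since $x \mapsto p$ is a linear isomorphism from $\R^n$ onto the space $V$ of polynomials of degree at most $n-1$, the inertia of $M$ equals the inertia of the symmetric bilinear form $B(p,q) = \int_0^1 p(t)\, q(1-t)\, dt$ on $V$.

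Next I would exploit the involution $(\sigma p)(t) = p(1-t)$ on $V$, which satisfies $\sigma^2 = \mathrm{id}$, and write $B(p,q) = \langle p, \sigma q\rangle$, where $\langle \cdot,\cdot\rangle$ denotes the (positive definite) $L^2[0,1]$ inner product. Decompose $V = V_+ \oplus V_-$ into the $\pm 1$ eigenspaces of $\sigma$: here $V_+$ consists of the polynomials symmetric about $t=\tfrac12$ and $V_-$ of those antisymmetric about $t=\tfrac12$. Writing everything in the variable $s = t - \tfrac12$, the space $V_+$ is spanned by the even powers of $s$ and $V_-$ by the odd powers, taken among the degrees $0,1,\dots,n-1$.

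On these eigenspaces the form $B$ becomes transparent. For $p \in V_+$ we have $\sigma p = p$, so $B(p,p) = \langle p,p\rangle > 0$ for $p \neq 0$, while for $p \in V_-$ we have $\sigma p = -p$, so $B(p,p) = -\langle p,p\rangle < 0$ for $p \neq 0$. A short computation using the substitution $t \mapsto 1-t$ shows that $V_+$ and $V_-$ are orthogonal with respect to both $\langle \cdot,\cdot\rangle$ and $B$. Thus $V = V_+ \oplus V_-$ is a $B$-orthogonal decomposition on which $B$ is positive definite and negative definite, respectively, so by Sylvester's law of inertia $\textup{Inertia}(M) = \left(\dim V_+,\, 0,\, \dim V_-\right)$; in particular the zero-eigenvalue count is $0$ and $M$ is non singular.

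It then remains only to count dimensions. Among the degrees $0,1,\dots,n-1$, the even ones contribute to $V_+$ and the odd ones to $V_-$: when $n$ is even each count is $\tfrac{n}{2}$, and when $n$ is odd there are $\tfrac{n+1}{2}$ even degrees and $\tfrac{n-1}{2}$ odd degrees, which reproduces exactly the two cases of the theorem. I expect the only delicate points to be the clean derivation of the identity $x^T M x = \int_0^1 p(t)\, p(1-t)\, dt$ and the careful verification that $V_+$ and $V_-$ are genuinely $B$-orthogonal; once these are in place the definiteness on each summand and the dimension count are immediate, so there is no serious analytic obstacle.
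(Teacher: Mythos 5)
Your proposal is correct, and it takes a genuinely different route from the paper. The paper proceeds in three steps: it first computes the determinant of $\left[\beta(i,j)\right]$ exactly (Proposition~\ref{determinant}), by factoring $K=\left[\frac{1}{(i+j-1)!}\right]$ as $D_2BAD_1$ with explicit triangular matrices and verifying the factorization through binomial identities; it then deduces that $\det(\mathcal{B}_n^{\circ(-1)})$ and $\det(\mathcal{B}_{n+1}^{\circ(-1)})$ have the same sign exactly when $n$ is even (Corollary~\ref{determinant sign}); finally it inducts on $n$, using Cauchy's interlacing theorem to force all but one eigenvalue of the $(n+1)\times(n+1)$ matrix and the determinant sign to pin down the last one. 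Your argument instead represents the quadratic form by the Euler integral, $x^TMx=\int_0^1 p(t)p(1-t)\,dt$, and diagonalizes it via the involution $\sigma p(t)=p(1-t)$: the $\pm1$ eigenspaces $V_\pm$ (polynomials symmetric/antisymmetric about $t=\tfrac12$) are $B$-orthogonal, $B$ is positive definite on $V_+$ and negative definite on $V_-$ because $\sigma$ is a self-adjoint $L^2$-isometry, and Sylvester's law of inertia plus the dimension count $\dim V_+=\lceil n/2\rceil$, $\dim V_-=\lfloor n/2\rfloor$ gives the theorem; all the steps you flag as delicate (the integral identity, the orthogonality of $V_+$ and $V_-$) are routine substitutions $t\mapsto 1-t$ and do hold. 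The trade-off: your proof is shorter, self-contained, needs no induction, no interlacing and no combinatorial identities, yields nonsingularity as a by-product, and exhibits the positive and negative subspaces explicitly, which is more conceptual; the paper's heavier factorization machinery, on the other hand, is not wasted there, since the same decomposition $K=D_2BAD_1$ is reused to compute the exact determinant \eqref{xx} and the explicit integer inverse in Theorem~\ref{inverse of had inv of beta}, neither of which your method produces.
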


 Let $\|\cdot\|_1$ denote the \emph{trace norm} on the space $\M_n(\R)$ of $n \times n$ real matrices. Let $X, Y \in \M_n(\R)$. Then $X$ is said to be \emph{Birkhoff-James orthogonal} to $Y$ in the trace norm if \begin{equation}\|X+t Y\|_1\geq \|X\|_1\text{ for all }t\in \mathbb R.\label{synn:new}\end{equation}
This has been studied in detail in \cite{Bhatia, 2017,  pereira, lischneider}. For more details on this topic, we refer the reader to \cite{2020} and the references therein.

Let $I$ denote the identity matrix of order $n$. Substituting $H=[\beta(i,j)]$  and $B=I$ in Theorem~2.3 of \cite{pereira}, we get that the matrix $[\beta(i,j)]$ is Birkhoff-James orthogonal to $I$ in the trace norm if and only if $\pi([\beta(i,j)])\leq \frac{n}{2}$ and $\nu([\beta(i,j)])\leq \frac{n}{2}$. So, Theorem~\ref{inertia of beta had powers} gives us the following result. 

\begin{theorem}\label{1.2}
The $n \times n$ matrix $\left[\beta(i,j)\right]$ is Birkhoff-James orthogonal to the matrix $I$ in the trace norm if and only if $n$ is even.
\end{theorem}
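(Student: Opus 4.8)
The plan is to combine the reduction to an inertia condition already recorded in the paragraph preceding the statement with the exact inertia supplied by Theorem~\ref{inertia of beta had powers}. Specifically, Theorem~2.3 of \cite{pereira}, applied with $H=[\beta(i,j)]$ and $B=I$, tells us that $[\beta(i,j)]$ is Birkhoff--James orthogonal to $I$ in the trace norm precisely when $\pi([\beta(i,j)])\leq \frac{n}{2}$ and $\nu([\beta(i,j)])\leq \frac{n}{2}$ hold simultaneously. Thus the entire argument reduces to checking these two inequalities against the values of $\pi$ and $\nu$ coming from Theorem~\ref{inertia of beta had powers}.

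First I would record the structural observation that $[\beta(i,j)]$ is nonsingular: Theorem~\ref{inertia of beta had powers} gives $\xi([\beta(i,j)])=0$ in both parities, so $\pi([\beta(i,j)])+\nu([\beta(i,j)])=n$. Consequently the two inequalities $\pi\leq \frac{n}{2}$ and $\nu\leq \frac{n}{2}$ can both hold only when $\pi=\nu=\frac{n}{2}$, which in turn forces $n$ to be even. This single remark already yields the ``only if'' direction without any case analysis.

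For the converse I would feed the two cases of Theorem~\ref{inertia of beta had powers} into the criterion. When $n$ is even, the inertia is $\left(\frac{n}{2},0,\frac{n}{2}\right)$, so both inequalities hold with equality and orthogonality follows. When $n$ is odd, the inertia is $\left(\frac{n+1}{2},0,\frac{n-1}{2}\right)$, whence $\pi=\frac{n+1}{2}>\frac{n}{2}$ violates the first inequality and orthogonality fails. Since all the genuine content has been isolated into Theorem~\ref{inertia of beta had powers} and the cited characterization, the only point demanding care is to confirm that the hypotheses of Theorem~2.3 of \cite{pereira} are genuinely met by the pair $\left([\beta(i,j)],\,I\right)$ --- in particular the relevant nondegeneracy and norm-attainment conditions of that theorem. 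I expect no substantive obstacle beyond this bookkeeping, the real difficulty having been absorbed into the inertia computation of Theorem~\ref{inertia of beta had powers}.
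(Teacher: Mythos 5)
Your proposal is correct and follows exactly the paper's own route: the paper likewise invokes Theorem~2.3 of \cite{pereira} with $H=[\beta(i,j)]$, $B=I$ to reduce orthogonality to the inequalities $\pi\leq \frac{n}{2}$, $\nu\leq \frac{n}{2}$, and then reads these off from Theorem~\ref{inertia of beta had powers} in the two parity cases. Your extra remark that nonsingularity forces $\pi=\nu=\frac{n}{2}$ in the orthogonal case is a harmless refinement, not a departure.
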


Similar results about the Hadamard inverse of the Pascal matrix are also pointed out. A matrix is called an \emph{integer matrix} if all its entries are integers. The matrix $\mathcal{P}^{\circ(-1)}$ is not an integer matrix, but it was proved in~\cite{richardson2014reciprocal} that its inverse is. The \emph{Hilbert matrix} given by $H = \left[\dfrac{1}{i+j-1}\right]$ is not an integer matrix but its inverse is an integer matrix, see~\cite{berman2002inverse}. Likewise, there exist some other well-known non integer matrices, whose inverses are integer matrices, see~\cite{kilicc2015variant, prodinger2015reciprocal, richardson2014super}. 
The next main result shows that the inverse of the matrix $[\beta(i,j)]$ is also an integer matrix. To do so, we compute its entries explicitly.
For any positive integers $r,k$ 
let $\binom{r}{k}$ denote the {binomial coefficient}. 
The coefficient $\binom{0}{k}$ is $1$ if $k=0$, and $0$ otherwise. 

\begin{theorem}\label{inverse of had inv of beta}
The inverse of $\left[{\beta(i,j)}\right]$ is an integer matrix with $(i,j)$th entry given by 
\begin{align*}
    (-1)^{n+i-j}\binom{n+i-1}{i-1}\binom{n}{j}j\sum_{1\leq k\leq \textup{min}(i,j)}\binom{n-k}{n-i}\binom{n+j-1}{n+k-1}(-1)^k.
\end{align*}
\end{theorem}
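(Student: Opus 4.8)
The plan is to verify directly that the stated matrix, call it $N=[N_{jk}]$, is the inverse of $M:=\left[\beta(i,j)\right]$; the integrality assertion then needs no separate argument, since each entry of $N$ is a finite sum of products of binomial coefficients with the integer $j$, hence an integer. Thus the whole content is the matrix identity $MN=I$ (equivalently $NM=I$), where $I_{ik}=\delta_{ik}$ is the Kronecker delta.

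First I would replace $\beta$ by its integral representation $\beta(i,j)=\int_0^1 t^{i-1}(1-t)^{j-1}\,dt$. Interchanging the finite sum with the integral gives
\[
(MN)_{ik}=\sum_{j=1}^n \beta(i,j)\,N_{jk}=\int_0^1 t^{i-1}\,\phi_k(t)\,dt,\qquad \phi_k(t):=\sum_{j=1}^n (1-t)^{j-1}N_{jk}.
\]
Hence it suffices to prove that the polynomial $\phi_k$, which has degree at most $n-1$, satisfies the biorthogonality relations $\int_0^1 t^{i-1}\phi_k(t)\,dt=\delta_{ik}$ for $1\le i\le n$. Because $\{t^{i-1}\}_{i=1}^n$ spans the polynomials of degree at most $n-1$, these $n$ conditions determine $\phi_k$ uniquely, so the task is reduced to checking them for the explicit coefficients $N_{jk}$.

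To check them I would substitute the closed form for $N_{jk}$, expand $(1-t)^{j-1}$ by the binomial theorem, and integrate term by term; every resulting integral is again a beta value $\int_0^1 t^{a}(1-t)^{b}\,dt=\beta(a+1,b+1)$. After merging the inner summation index of the formula with the binomial-expansion index, the assertion $\int_0^1 t^{i-1}\phi_k(t)\,dt=\delta_{ik}$ collapses to a single binomial-coefficient identity in $i,k,n$. This identity is the heart of the matter and the step I expect to be the main obstacle: it is a terminating hypergeometric sum, which I would evaluate by the Chu--Vandermonde / Pfaff--Saalsch\"utz summation, using the vanishing of the sum for $i\neq k$ and its normalization for $i=k$ to settle the two cases.

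As an alternative, and as a useful sanity check on the bookkeeping, I would record the factorization produced by the same expansion, namely $M=HC$, where $H=\left[\frac{1}{i+l-1}\right]$ is the Hilbert matrix and $C=\left[(-1)^{l-1}\binom{j-1}{l-1}\right]$ is upper triangular; then $M^{-1}=C^{-1}H^{-1}$, and inserting the classical closed forms for the inverse Hilbert matrix and for the inverse of the signed Pascal triangular matrix $C$ reproduces the same entries after the same type of summation identity. Finally, since $M$ is symmetric the inverse $N$ must satisfy $N_{ij}=N_{ji}$; confirming this symmetry directly from the formula provides a convenient independent check on the computation.
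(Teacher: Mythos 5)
Your overall strategy is genuinely different from the paper's: the paper factors $\left[\beta(i,j)\right]=\text{diag}[(i-1)!]\cdot K\cdot\text{diag}[(i-1)!]$ with $K=\left[\frac{1}{(i+j-1)!}\right]$, further factors $K$ into explicit triangular matrices $A,B$ and diagonals, and then inverts each factor via two applications of the Vandermonde-type identity (eq.~5.24 of \cite{graham1989concrete}). Your reduction is also valid as far as it goes: passing to $\phi_k(t)=\sum_j(1-t)^{j-1}N_{jk}$ and noting that the moment conditions $\int_0^1 t^{i-1}\phi_k(t)\,dt=\delta_{ik}$, $1\le i\le n$, determine $\phi_k$ uniquely is correct, and the integrality claim is indeed immediate from the shape of the formula (as it is in the paper's proof too).

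However, there is a genuine gap: the heart of the proof is never carried out. You reduce everything to ``a single binomial-coefficient identity in $i,k,n$'' and then only name the tool (Chu--Vandermonde / Pfaff--Saalsch\"utz) you would apply, explicitly flagging this step as the main obstacle. But that identity is exactly where the content of the theorem lives: after substituting $N_{jk}$ (itself a sum over $l\le\min(j,k)$) and expanding $(1-t)^{j-1}$, you face a triple sum whose collapse requires handling the sign pattern $(-1)^{n+j-k+l}$, the truncated inner range $l\le \min(j,k)$ (which obstructs a naive interchange of summations), and separate treatment of the off-diagonal vanishing versus the diagonal normalization. None of this is exhibited, so what you have is a plausible plan, not a proof; it is not even clear without doing the work that the sum is Saalsch\"utzian rather than requiring a different summation theorem. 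A smaller slip: once you expand $(1-t)^{j-1}$ by the binomial theorem, the resulting integrals are $\int_0^1 t^{i+s-1}\,dt=\frac{1}{i+s}$ (Hilbert-matrix entries), not beta values; beta values arise only if you do \emph{not} expand, and the two bookkeeping schemes should not be conflated. On the positive side, your alternative factorization $M=HC$, with $H$ the Hilbert matrix and $C$ the signed upper-triangular Pascal matrix, would by itself give a clean proof of integrality, since $H^{-1}$ and $C^{-1}$ are classically integer matrices --- but recovering the paper's explicit entry formula from it still requires executing the same summation you left undone.
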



Section~\ref{Section 2} is devoted to the proofs of the above results (Theorem~\ref{inertia of beta had powers} and Theorem~\ref{inverse of had inv of beta}). In Section \ref{section 2}, we consider the more general matrix $\left[{\beta(\la_i,\mu_j)}\right],$ where $0< \la_1<\cdots<\la_n$ and $0< \mu_1<\cdots<\mu_n$. For $m\in \mathbb N$, the matrix $ \left[{\beta(\la_i,\mu_j)}^m\right]$ denotes its $m$th \emph{Hadamard power}. One of the main results of this section is that the matrix $ \left[{\beta(\la_i,\mu_j)}^m\right]$ is nonsingular if $\mu_{i+1}-\mu_i\in \N$ for all $1\leq i \leq n-1$. 
In Section~\ref{remarks are here}, some remarks are given.

\section{Proofs}\label{Section 2}

To prove Theorem~\ref{inertia of beta had powers}, we first compute the determinant of $\left[{\beta(i,j)}\right].$

\begin{prop}\label{determinant}
    The determinant of the $n\times n$ matrix $\left[{\beta(i,j)}\right]$ is given by 
    \begin{equation}
\det \left(\left[{\beta(i,j)}\right]\right)
=(-1)^{n(3n+1)/2}\prod_{i=1}^{n}\frac{1}{\binom{n+i-1}{n}\binom{n}{i}i}.\label{xx}
\end{equation} 
\end{prop}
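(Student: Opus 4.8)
The plan is to exploit the Euler integral for the beta function, $\beta(i,j)=\int_0^1 t^{i-1}(1-t)^{j-1}\,dt$, which exhibits $\left[\beta(i,j)\right]$ as a moment (Gram-type) matrix built from the two families $\phi_i(t)=t^{i-1}$ and $\psi_j(t)=(1-t)^{j-1}$ on $[0,1]$. This representation invites the Andreief (continuous Cauchy--Binet / Heine) identity, which rewrites the determinant of a matrix of inner products as a single symmetric multiple integral of a product of two coordinate determinants. The virtue of going through the integral directly is that no preliminary row/column scaling by $(i-1)!$ and $(j-1)!$ is needed.

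First I would apply Andreief's identity to obtain
$$\det\left(\left[\beta(i,j)\right]\right)=\frac{1}{n!}\int_{[0,1]^n}\det\left[t_k^{\,i-1}\right]_{i,k=1}^n\,\det\left[(1-t_k)^{\,j-1}\right]_{j,k=1}^n\,dt_1\cdots dt_n.$$
Each factor is a Vandermonde determinant: $\det[t_k^{\,i-1}]=\prod_{1\le k<l\le n}(t_l-t_k)$, while the second is Vandermonde in the variables $1-t_k$, so $\det[(1-t_k)^{\,j-1}]=\prod_{k<l}\big((1-t_l)-(1-t_k)\big)=(-1)^{\binom{n}{2}}\prod_{k<l}(t_l-t_k)$, the sign arising because each of the $\binom{n}{2}$ factors flips. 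Multiplying collapses the integrand to $(-1)^{\binom{n}{2}}$ times the squared Vandermonde, giving
$$\det\left(\left[\beta(i,j)\right]\right)=\frac{(-1)^{\binom{n}{2}}}{n!}\int_{[0,1]^n}\ \prod_{1\le k<l\le n}(t_l-t_k)^2\,dt_1\cdots dt_n.$$

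Next I would evaluate this integral via the Selberg integral with parameters $\alpha=\beta=\gamma=1$, which gives $\int_{[0,1]^n}\prod_{k<l}(t_l-t_k)^2\,dt=\prod_{j=0}^{n-1}\frac{(j!)^2(j+1)!}{(n+j)!}$. Dividing by $n!$ produces a closed product of factorials for the determinant, up to the sign $(-1)^{\binom{n}{2}}$. Invoking Selberg is the one external input; if a self-contained argument is wanted, this particular Vandermonde-square integral can instead be verified by induction on $n$ through a determinant recurrence.

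The remaining, genuinely fiddly, step is the bookkeeping that turns this factorial product into the advertised binomial product. I would expand $\binom{n+i-1}{n}\binom{n}{i}\,i=\frac{(n+i-1)!}{((i-1)!)^2\,(n-i)!}$, reindex the Selberg product by $i=j+1$, cancel the common factors $((i-1)!)^2/(n+i-1)!$, and reduce the claimed identity to the elementary relation $\frac{1}{n!}\prod_{i=1}^n i!=\prod_{k=0}^{n-1}k!$, which holds because the left side telescopes to $\prod_{k=1}^{n-1}k!$. Finally I would reconcile the signs: the two exponents differ by $\frac{n(3n+1)}{2}-\binom{n}{2}=n(n+1)$, which is always even, so $(-1)^{\binom{n}{2}}=(-1)^{n(3n+1)/2}$ and the stated formula follows. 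The main obstacle is therefore not conceptual but the careful factorial-to-binomial manipulation together with the sign tracking through the two Vandermonde reflections.
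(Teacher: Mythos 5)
Your proposal is correct, but it follows a genuinely different route from the paper's. You argue analytically: the Euler integral exhibits $\left[\beta(i,j)\right]$ as a Gram-type matrix, Andreief's identity converts its determinant into $\frac{(-1)^{\binom{n}{2}}}{n!}\int_{[0,1]^n}\prod_{k<l}(t_l-t_k)^2\,dt$, and Selberg's integral at $\alpha=\beta=\gamma=1$ evaluates this; your sign reconciliation is right, since $\frac{n(3n+1)}{2}-\binom{n}{2}=n(n+1)$ is always even, and your factorial-to-binomial reduction (via $\binom{n+i-1}{n}\binom{n}{i}i=\frac{(n+i-1)!}{((i-1)!)^2(n-i)!}$ and the telescoping identity $\frac{1}{n!}\prod_{i=1}^n i!=\prod_{k=0}^{n-1}k!$) also checks out. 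The paper instead proceeds algebraically: it pulls out $\mathrm{diag}[(i-1)!]$ on both sides to reduce to $K=\left[\frac{1}{(i+j-1)!}\right]$, and then establishes the explicit factorization $K=D_2BAD_1$ of \eqref{new equation for K} into diagonal and triangular matrices, verified entrywise by the upper-negation and Vandermonde-convolution identities, after which the determinant is read off the diagonal entries. Each approach buys something: yours avoids having to guess and verify a triangular factorization, at the cost of invoking Selberg as a black box --- though at $\gamma=1$ your integral equals $n!\det\left[\frac{1}{i+j-1}\right]$, the classical Hilbert-matrix determinant, so the full Selberg machinery is heavier than strictly needed. The paper's factorization, by contrast, is not merely a determinant device: it is reused verbatim to compute the explicit integer inverse in Theorem~\ref{inverse of had inv of beta} and yields the LU decomposition noted in the remarks, neither of which the integral representation provides.
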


\begin{proof}
    Let $K = \left[\dfrac{1}{(i+j-1)!}\right]$. Then \begin{align}\label{eqn for schur inverse of beta}
\left[{\beta(i,j)}\right]=\text{diag}[(i-1)!]\cdot K \cdot\text{diag}[(i-1)!].
\end{align}
Let $A$ and $B$ be the matrices whose $(i,j)$th entries are given by 
\begin{align*}
A_{ij}&= \begin{cases}
    \displaystyle \binom{n-j}{n-i}(-1)^j &\text{if} \,\, i\geq j,\\[0.5 cm]
    0 & \text{otherwise},
    \end{cases}
\intertext{and} 
    B_{ij}&= \begin{cases}(-1)^{i-j}\displaystyle \binom{n+j-1}{n+i-1}\,\,& \text{if}\,\, i\leq j,\\[0.5 cm]0 &\text{otherwise},\end{cases}
\end{align*}
respectively. Let $D_{1} = \text{diag}\left[\frac{(-1)^{n-i}} {(n+i-1)!}\right]$ and $D_{2} = \text{diag}\left[(-1)^i(n-i)!\right]$.
We first show that \begin{align}{\label{new equation for K}}
K = D_2BAD_1.
\end{align}
To prove this, we show that 
$BA = D_2^{-1}KD_1^{-1}$  that is,
\begin{align} \label{eqn: result}
\sum_{k= \textup{max}(i,j)}^{n}\binom{n+k-1}{n+i-1}\binom{n-j}{n-k}(-1)^{i-k+j}&=\frac{(-1)^{n+j-i}(n+j-1)!}{(n-i)!(i+j-1)!}.
\end{align}
Let $\rho, \tau \in \R$ and $a,b,c \in \Z$. Consider the following combinatorial identities given in \cite[p. 174]{graham1989concrete} and \cite[p. 169]{graham1989concrete}, respectively:
 \begin{align} \label{eqn: upper negation}
    \binom{\tau}{c}&=(-1)^{c}\binom{c-\tau-1}{c},
\end{align}
and
\begin{align} \label{eqn: concrete math}
    \sum_{c \in \Z}\binom{\tau}{a+c}\binom{\rho}{b-c} &= \binom{\tau+\rho}{a+b}. 
\end{align}
So,
\begin{align}
\sum_{k= \textup{max}(i,j)}^{n}\binom{n+k-1}{n+i-1}\binom{n-j}{n-k}(-1)^{k}&= \sum_{k= \textup{max}(i,j)}^{n}\binom{n+k-1}{k-i}\binom{n-j}{n-k}(-1)^{k}\nonumber\\
&=(-1)^{-i}\sum_{k= \textup{max}(i,j)}^{n}\binom{-n-i}{k-i}\binom{n-j}{n-k}\label{*}\\
&=(-1)^{-i}\binom{-i-j}{n-i}\label{**}\\
&=(-1)^{-i}(-1)^{n-i}\binom{n+j-1}{n-i}\label{***}\\
&=(-1)^{n} \binom{n+j-1}{n-i}\nonumber\\
&=(-1)^{n}\frac{(n+j-1)!}{(n-i)!(i+j-1)!}.\nonumber
\end{align}

 Here, \eqref{*} and \eqref{***} follow by \eqref{eqn: upper negation}, and \eqref{**} follows by \eqref{eqn: concrete math}. This shows \eqref{new equation for K}.
 
So, by~\eqref{eqn for schur inverse of beta} and~\eqref{new equation for K}, we get
\begin{align*}
\det \left(\left[{\beta(i,j)}\right]\right)&=\prod_{i=1}^{n}(-1)^n(-1)^i(n-i)!(i-1)!\frac{(i-1)!}{(n+i-1)!} \nonumber \\
&=(-1)^{n^2}(-1)^{n(n+1)/2}\prod_{i=1}^{n}(n-i)!(i-1)!\frac{(i-1)!}{(n+i-1)!}\\
&=(-1)^{n(3n+1)/2}\prod_{i=1}^{n}\frac{1}{\dfrac{(n+i-1)!}{n!(i-1)!}\dfrac{n!}{i!(n-i)!}(i)}.
\end{align*}
This proves \eqref{xx}.
\end{proof}

 For the next result, the $n\times n$ matrix $\left[{\beta(i,j)}\right]$ is denoted by $\mathcal B_n^{\circ (-1)}$.
\begin{cor}\label{determinant sign}
 The determinants of $\mathcal{B}_n^{\circ (-1)}$ and $\mathcal{B}_{n+1}^{\circ (-1)}$ have same sign if $n$ is even, and opposite signs if $n$ is odd. 
\end{cor}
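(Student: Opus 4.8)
The plan is to read off the sign of each determinant directly from the explicit formula in Proposition~\ref{determinant}. First I would observe that the product $\prod_{i=1}^{n}\frac{1}{\binom{n+i-1}{n}\binom{n}{i}\,i}$ appearing in \eqref{xx} is a product of reciprocals of positive integers, hence strictly positive. Consequently the sign of $\det\!\left(\mathcal{B}_n^{\circ(-1)}\right)$ is governed entirely by the prefactor $(-1)^{n(3n+1)/2}$. Substituting $n+1$ for $n$ in the exponent $n(3n+1)/2$, the sign of $\det\!\left(\mathcal{B}_{n+1}^{\circ(-1)}\right)$ is likewise $(-1)^{(n+1)(3n+4)/2}$.

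Next I would compare these two signs by tracking the parity of the difference of the two exponents. A short computation gives
\[
\frac{(n+1)(3n+4)}{2}-\frac{n(3n+1)}{2}
=\frac{(3n^2+7n+4)-(3n^2+n)}{2}
=\frac{6n+4}{2}
=3n+2.
\]
Hence $(-1)^{(n+1)(3n+4)/2}=(-1)^{n(3n+1)/2}\cdot(-1)^{3n+2}$, so the two determinants share the same sign precisely when $3n+2$ is even and have opposite signs when $3n+2$ is odd.

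Finally I would observe that $3n+2$ is even if and only if $3n$ is even, i.e.\ if and only if $n$ is even. This yields the claim: $\det\!\left(\mathcal{B}_n^{\circ(-1)}\right)$ and $\det\!\left(\mathcal{B}_{n+1}^{\circ(-1)}\right)$ have the same sign when $n$ is even, and opposite signs when $n$ is odd.

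There is no serious obstacle here; the argument is essentially parity bookkeeping for the exponent $n(3n+1)/2$. The one point I would verify along the way is that this exponent is always an integer, equivalently that $n(3n+1)$ is always even, so that $(-1)^{n(3n+1)/2}$ is an unambiguous sign; this is immediate since exactly one of $n$ and $3n+1$ is even for every $n$.
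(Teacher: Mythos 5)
Your proof is correct and is essentially the paper's own argument: both read the sign off from Proposition~\ref{determinant} and reduce the claim to a parity computation on the exponents, the only cosmetic difference being that you compare the difference of exponents (getting $(-1)^{3n+2}$) while the paper compares their sum (getting $(-1)^{n(3n+4)}$), which are the same modulo $2$. Your added checks that the product of reciprocals is positive and that $n(3n+1)/2$ is an integer are sound and harmless.
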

\begin{proof}
 From~\eqref{xx}, the sign of the product $\det(\mathcal{B}_n^{\circ (-1)})\det(\mathcal{B}_{n+1}^{\circ (-1)})$ is $(-1)^{\frac{n(3n+1)}{2}}\cdot(-1)^{\frac{(n+1)(3n+4)}{2}}$. This is same as $(-1)^{n(3n+4)}$, which is positive if $n$ is even, and negative, otherwise.
\end{proof}

\textit{Proof of Theorem \ref{inertia of beta had powers}}  The result is obvious for $n=1$. Let the result hold for any odd integer $n$. Then $\textup{Inertia}\left(\mathcal{B}_n^{\circ (-1)}\right) = \left(\frac{n+1}{2},0,\frac{n-1}{2}\right).$ Therefore, by Cauchy's interlacing theorem~\cite[Theorem 4.3.17]{horn2012matrix}, the matrix $\mathcal{B}_{n+1}^{\circ (-1)}$ has at least $\frac{n+1}{2}$ positive, and $\frac{n-1}{2}$ negative eigenvalues. By Corollary \ref{determinant sign}, the remaining eigenvalue of $\mathcal{B}_{n+1}^{\circ (-1)}$ will be negative. Thus $\textup{Inertia}\left(\mathcal{B}_{n+1}^{\circ (-1)}\right)= \left(\frac{n+1}{2},0,\frac{n+1}{2}\right).$ Therefore, the result holds for $n+1$. A similar argument shows that if the result holds for any even positive integer $n$, then it holds for $n+1$. \qed 
\vspace{0.2cm}

\textit{Proof of Theorem \ref{inverse of had inv of beta}}
    From \eqref{eqn for schur inverse of beta}, $$\left[{\beta(i,j)}\right]=\text{diag}[(i-1)!]\cdot K \cdot\text{diag}[(i-1)!].$$
This gives 
\begin{align}\label{decomp}
\left[{\beta(i,j)}\right]^{-1} = \text{diag}\left[\frac{1}{(i-1)!}\right] \cdot K^{-1}\cdot\text{diag}\left[\frac{1}{(i-1)!}\right].
\end{align}
Since $\displaystyle \binom{n+i-1}{i-1}\binom{n}{j}j= \dfrac{(n+i-1)!}{(i-1)!(n-j)!(j-1)!}$, it is enough to show that 
\begin{align}\label{eqn: eqn for inverse of K}
    K^{-1} = \left[(-1)^{n+i-j}\frac{(n+i-1)!}{(n-j)!}\sum_{1\leq k\leq \textup{min}(i,j)}\binom{n-k}{n-i}\binom{n+j-1}{n+k-1}(-1)^k\right].
\end{align}
From \eqref{new equation for K}, we have 
\begin{align}
K^{-1}=D_1^{-1}A^{-1}B^{-1}D_2^{-1}.\label{new equation}
\end{align}
The $(i,j)$th entry of $A^2$ is given by
\vspace{0.2cm}
\begin{align*}
    (A^{2})_{ij} &= \sum_{j\leq k\leq i}\binom{n-k}{n-i}\binom{n-j}{n-k}(-1)^{k+j}.
\end{align*}   
Substituting $n-k = m$, we get
\begin{align*}
(A^{2})_{ij}&= (-1)^{j+n}\sum_{m=n-i}^{n-j}\binom{m}{n-i}\binom{n-j}{m}(-1)^{-m}\\
&=(-1)^{j+n}\sum_{m=n-i}^{n-j}\binom{m}{n-i}\binom{n-j}{m}(-1)^{m}.
\end{align*}
By \cite[eq. 5.24]{graham1989concrete},
$$(A^{2})_{ij}=(-1)^{j+n}(-1)^{n-j}\binom{0}{j-i}=\delta_{ij}.$$
This gives that $A^{-1} = A$. Next, we show that \begin{equation}(B^{-1})_{ij} = \begin{cases}\displaystyle \binom{n+j-1}{n+i-1}\,\,& \text{if}\,\, i\leq j,\\[0.5 cm]0 &\text{otherwise}.\end{cases}\label{new}\end{equation}
For this, consider the  sum 
$$\sum_{i\leq k\leq j}\binom{n+k-1}{n+i-1}(-1)^{k-j}\binom{n+j-1}{n+k-1}.$$ Putting $n+k-1 = m$, this sum becomes
\begin{align*}
&\quad \ (-1)^j\sum_{m = n+i-1}^{n+j-1}\binom{m}{n+i-1}\binom{n+j-1}{m}(-1)^{m+1-n}\\
&=(-1)^j(-1)^{1-n}\sum_{m = n+i-1}^{n+j-1}\binom{m}{n+i-1}\binom{n+j-1}{m}(-1)^{m}.
\end{align*}
By~\cite[eq. 5.24]{graham1989concrete}, this is same as
\begin{align*}
    (-1)^j(-1)^{1-n}(-1)^{n+j-1}\binom{0}{i-j} = \binom{0}{i-j} = \delta_{ij}. 
\end{align*}
This shows \eqref{new}. 
Now, from~\eqref{new equation}, 
\begin{align*}
    (K^{-1})_{ij} &= (D_1^{-1})_{ii}(A^{-1}B^{-1})_{ij}(D_2^{-1})_{jj}\\
    &=(-1)^{n-i}(n+i-1)!\left(\sum_{1\leq k \leq \text{min}(i,j)}(-1)^{k}\binom{n-k}{n-i}\binom{n+j-1}{n+k-1}\right)\frac{(-1)^j}{(n-j)!}.
\end{align*}
This proves \eqref{eqn: eqn for inverse of K}. \qed
\section{Some properties of $\left[\beta(\la_i,\mu_j)^{m}\right]$}\label{section 2}

It is clear from Proposition \ref{determinant} that $\left[\beta(i,j)\right]$ is non singular. The below theorem gives a more general result.

\begin{theorem} \label{res beta had pow is non singular}
Let $m \in \N$. Suppose $0< \la_1<\cdots<\la_n$ and $0< \mu_1<\cdots<\mu_n$. Let 
$\mu_{i+1}-\mu_i\in \N$ for all $1\leq i \leq n-1$. 
Then $\left[\frac{1}{(\Gamma(\lambda_{i}+ \mu_j))^{m}}\right]$ and $\left[\beta(\la_i,\mu_j)^{m}\right]$ are non singular.
\end{theorem}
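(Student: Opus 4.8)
The plan is to strip off every factor that is manifestly invertible and reduce to a single sign-regularity statement. First I would use $\beta(\la_i,\mu_j)=\Gamma(\la_i)\Gamma(\mu_j)/\Gamma(\la_i+\mu_j)$ to write
\[
\left[\beta(\la_i,\mu_j)^m\right]=\diag[\Gamma(\la_i)^m]\cdot\Big[\tfrac{1}{\Gamma(\la_i+\mu_j)^m}\Big]\cdot\diag[\Gamma(\mu_j)^m].
\]
Since $\la_i,\mu_j>0$, the outer diagonal factors are invertible, so the two matrices in the statement are simultaneously nonsingular and it suffices to treat $N=\big[1/\Gamma(\la_i+\mu_j)^m\big]$. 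Writing $d_j=\mu_j-\mu_1$, the hypothesis $\mu_{i+1}-\mu_i\in\N$ produces integers $0=d_1<d_2<\cdots<d_n$, and the recurrence $\Gamma(x+d)=(x)_d\,\Gamma(x)$ (rising factorial $(x)_d=x(x+1)\cdots(x+d-1)$, $(x)_0=1$) lets me pull $1/\Gamma(\la_i+\mu_1)^m$ out of the $i$th row, leaving
\[
N=\diag\Big[\tfrac{1}{\Gamma(\la_i+\mu_1)^m}\Big]\cdot M,\qquad M=\Big[\tfrac{1}{((x_i)_{d_j})^m}\Big],\quad x_i=\la_i+\mu_1>0\text{ distinct}.
\]
Everything then reduces to showing $\det M\neq0$.

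Next I would clear denominators: multiplying the $i$th row of $M$ by the nonzero number $\prod_{r=0}^{d_n-1}(x_i+r)^m$ turns its $(i,j)$ entry into the polynomial $p_j(x_i)$, where $p_j(x)=\prod_{r=d_j}^{d_n-1}(x+r)^m$. Hence $\det M$ equals a nonzero scalar times $\det[p_j(x_i)]$, and the theorem becomes the assertion that $\{p_1,\dots,p_n\}$ — polynomials of strictly decreasing degrees $m(d_n-d_j)$, all of whose roots lie in $(-\infty,0]$ and which are positive on $(0,\infty)$ — form a Chebyshev system on $(0,\infty)$; equivalently, that the completely monotone functions $f_j(x)=1/((x)_{d_j})^m$ do. This is exactly where the hypothesis is used on the $\mu_j$ rather than on the $\la_i$: the reduction proceeds column-by-column in the integers $d_j$, while the $x_i$ (hence the $\la_i$) stay arbitrary distinct positive reals.

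To establish the Chebyshev property I would pass to the Laplace domain. Each $f_j$ is completely monotone, and the beta integral $\frac{1}{(x)_d}=\frac{1}{(d-1)!}\int_0^\infty e^{-xu}(1-e^{-u})^{d-1}\,du$ (for $d\geq1$; the $d_1=0$ column is the constant $1$) gives, after taking the $m$th power as an $m$-fold integral, a representation $f_j(x)=\int_0^\infty e^{-xu}g_j(u)\,du$ with $g_j\geq0$. The Cauchy--Binet (Andr\'eief) identity then yields
\[
\det M=\frac{1}{n!}\int_{(0,\infty)^n}\det\big[e^{-x_iu_k}\big]_{i,k}\,\det\big[g_j(u_k)\big]_{j,k}\,du_1\cdots du_n .
\]
On the ordered chamber $u_1<\cdots<u_n$ the first determinant is a generalized Vandermonde in the variables $e^{-u_k}$ and has the constant sign $(-1)^{\binom n2}$, so $\det M\neq0$ will follow once the inner determinant $\det[g_j(u_k)]$ is shown to keep a constant (nonzero) sign there, i.e. once $\{g_1,\dots,g_n\}$ is itself a Chebyshev system. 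For $m=1$ this is immediate: $g_j(u)\propto(1-e^{-u})^{d_j-1}$, and the substitution $\theta=1-e^{-u}$ turns these into distinct powers $\theta^{d_j-1}$, a M\"untz--Vandermonde system.

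The main obstacle is the constant-sign statement for the inner system when $m\geq2$, since then $g_j$ is an $m$-fold convolution and is no longer a pure power of $1-e^{-u}$. I expect to close this either by the variation-diminishing property of the totally positive kernel $e^{-xu}$ — bounding the number of zeros of $\sum_j c_jf_j$ on $(0,\infty)$ by the number of sign changes of the density $\sum_j c_jg_j$ and proving the latter is at most $n-1$ — or by an induction on $n$ that peels off the factor $\prod_{r=d_j}^{d_{j+1}-1}(x+r)^m$ relating consecutive $p_j$ and applies Rolle's theorem. It is worth emphasizing that one should target only nonsingularity (sign-regularity with signature $(-1)^{\binom k2}$) and \emph{not} total positivity of $M$: already for $n=2$, $m=1$, $d=(0,1)$ one computes $\det M=(x_1-x_2)/(x_1x_2)<0$, so $M$ is not totally positive.
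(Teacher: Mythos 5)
Your opening reductions are correct and in fact coincide with the paper's: factoring out $\diag[\Gamma(\la_i)^m]$ and $\diag[\Gamma(\mu_j)^m]$, translating so that the column parameters become integers $d_j$, and clearing denominators so that nonsingularity of $\left[1/((x_i)_{d_j})^m\right]$ becomes the claim that every nontrivial combination $\sum_j c_j p_j$, with $p_j(x)=\prod_{r=d_j}^{d_n-1}(x+r)^m$, has at most $n-1$ zeros in $(0,\infty)$; the paper reaches exactly the same point by taking $(\Gamma(x+\mu_n))^m$ as the common denominator of $f(x)=\sum_j c_j\,\Gamma(x+\mu_j)^{-m}$. Your closing remark that one must target only nonsingularity and not total positivity is also correct. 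But the proposal has a genuine gap precisely at its crux: the Chebyshev/constant-sign property is established only for $m=1$, where the Laplace densities are the pure powers $(1-e^{-u})^{d_j-1}$. For $m\geq 2$ you only say you ``expect to close this'' by variation diminution or by a Rolle-type induction, and neither is carried out. The Rolle route in particular meets a real obstruction: differentiating $\prod_r(x+r)^{-m}$ produces the logarithmic-derivative factor $\sum_r \frac{1}{x+r}$, so the family of functions is not reproduced under the peel-off-and-differentiate scheme and the induction does not close. (A smaller issue: the $d_1=0$ column is the constant function $1$, whose representing measure is a unit point mass at $u=0$ rather than a density, so the Andr\'eief step needs an atomic or limiting modification.)

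The paper avoids all of this by working with coefficient sequences instead of integral representations, which is what makes the argument uniform in $m$. Lemma~\ref{not more than} shows that multiplying a real polynomial by $(x+\alpha)$ with $\alpha>0$ cannot increase the number of sign changes among its coefficients; since adding a constant increases that number by at most one, the nested polynomials $h_1=c_1\prod_{l=\mu_1}^{\mu_2-1}(x+l)^m+c_2$ and $h_k=h_{k-1}\prod_{l=\mu_k}^{\mu_{k+1}-1}(x+l)^m+c_{k+1}$ satisfy $N(h_{n-1})\leq n-1$ (Lemma~\ref{Lemma: at most $n-1$ zeros}), and Descartes' rule of signs then bounds the number of positive roots by $n-1$ --- for every $m$, because $\prod(x+l)^m$ has positive coefficients no matter what $m$ is. Your combination $\sum_j c_j p_j$ is exactly $h_{n-1}$ after regrouping, so the shortest repair of your write-up is to replace the Laplace-transform step by this Descartes-type counting; proving instead that $\sum_j c_j g_j$ has at most $n-1$ sign changes for the $m$-fold convolution densities $g_j$ is not easier than the original problem and is not an off-the-shelf fact.
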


To prove this result, we shall need the \emph{Descartes' rule of signs} \cite[p. 41, Ex. 36]{polya1997problems}: 
Let 
\begin{align}\label{poly}
p(x) = a_{p}x^p+a_{p-1}x^{p-1}+\cdots +a_{0}
\end{align}
be a real polynomial, where $a_p\neq 0$. Let $Z(p(x))$ denote the number of positive zeros of $p(x)$. Let $N(p(x))$ (or $N(a_p,a_{p-1},\ldots,a_0)$) denote the number of changes of signs in $a_p,a_{p-1},\ldots,a_0$ .  Then \begin{equation}Z(p(x)) \leq N(p(x)).\label{drs}\end{equation}


We now prove two lemmas.
\begin{lem}\label{not more than}
    Let $p(x)$ be a real polynomial of degree $n$ given in~\eqref{poly}. Let $\alpha >0$. Then we have 
    \begin{align}\label{equ: no. of zeros}
     N(p(x)(x+\alpha)) \leq N(p(x)).
    \end{align}
\end{lem}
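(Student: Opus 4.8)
The plan is to analyze how multiplying a polynomial by the linear factor $(x+\alpha)$ with $\alpha>0$ affects the sign-change count of the coefficient sequence, and to show it can only decrease or preserve it. Write $p(x) = a_p x^p + \cdots + a_0$ and $q(x) = p(x)(x+\alpha) = b_{p+1}x^{p+1} + \cdots + b_0$, where the new coefficients are $b_k = a_{k-1} + \alpha a_k$ (with the convention $a_{-1} = a_{p+1} = 0$). The goal is to compare $N(b_{p+1}, b_p, \ldots, b_0)$ against $N(a_p, \ldots, a_0)$ and establish $N(q) \le N(p)$.

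First I would reduce to the case where $p(x)$ has no zero coefficients, or at least argue that leading and trailing zeros and interior zeros do not affect the count in a way that breaks the inequality; the sign-change count is really a property of the subsequence of nonzero coefficients, so I would focus on that subsequence. The key structural fact I want to exploit is a classical and intuitive one: multiplying by a factor $(x+\alpha)$ with $\alpha>0$ cannot introduce new sign changes. The cleanest route is to track sign changes blockwise. Group the coefficients of $p$ into maximal runs of constant sign, so that $N(p)$ counts the boundaries between consecutive runs of opposite sign. Then examine what $q$ looks like on each such block and at the junctions between blocks, using $b_k = a_{k-1}+\alpha a_k$ together with $\alpha>0$.

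The heart of the argument is the following sign-preservation observation: within a run where consecutive $a_k$ share a common sign $\epsilon$, every $b_k = a_{k-1}+\alpha a_k$ formed from two same-sign (or one zero) terms also has sign $\epsilon$ (or is zero), so no sign change is created inside a block. The only places where a sign change in the $b$-sequence can occur are at the junctions between an $\epsilon$-block and the following $(-\epsilon)$-block, and one checks that each junction of $q$ contributes at most one sign change, matched to the single sign change of $p$ at that same junction. Thus each sign change of $q$ can be injectively associated with a distinct sign change of $p$, giving $N(q) \le N(p)$. I would phrase this as a careful bookkeeping over the transition indices, being attentive to zero coefficients: a zero $b_k$ at a junction should be skipped, and one must verify that skipping it cannot manufacture extra alternations.

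The main obstacle I expect is handling zero coefficients cleanly, since the definition of $N$ ignores zeros but the recurrence $b_k = a_{k-1}+\alpha a_k$ can produce zeros even when the $a_k$ are nonzero, and a vanishing $b_k$ could in principle sit exactly at a junction and complicate the one-to-one matching of sign changes. To control this I would either argue directly that a zero $b_k$ cannot increase the count (passing to the nonzero subsequence and checking the induced sign pattern), or invoke a small perturbation/continuity argument. An alternative, slicker approach that sidesteps the combinatorics is to use Descartes' rule together with its sharpness under the factor: since $(x+\alpha)$ contributes no positive root, $q$ and $p$ have the same positive roots, so $Z(q)=Z(p)$; combined with the parity refinement of Descartes' rule (the difference $N-Z$ is even), one can pin down the relationship between $N(q)$ and $N(p)$. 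However, because this lemma is the tool used to prove the sharpened form of Descartes' rule in this paper's setting, I would avoid circularity and prefer the direct blockwise sign-tracking proof outlined above.
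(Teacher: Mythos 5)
Your proposal is correct in substance but takes a genuinely different route from the paper. The paper's proof is a reduction: it expands $p(x)(x+\alpha)$ to get the coefficient sequence $(a_p,\,a_{p-1}+a_p\alpha,\,\ldots,\,a_0+a_1\alpha,\,a_0\alpha)$, rescales each entry by a suitable positive power of $\alpha$ (harmless for sign counting since $\alpha>0$) so that the sequence becomes $(a_p\alpha^p,\,a_{p-1}\alpha^{p-1}+a_p\alpha^p,\,\ldots,\,a_0+a_1\alpha,\,a_0)$, i.e.\ the scaled sequence of $p$ with adjacent-pair sums interposed, and then invokes Ex.~4 of P\'olya--Szeg\H{o} to conclude this has at most $N(a_p\alpha^p,\ldots,a_1\alpha,a_0)=N(p(x))$ sign changes; the combinatorial core is thus outsourced to the cited exercise. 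You instead prove the fact from first principles by the classical block/junction analysis: within a maximal like-signed run of coefficients of $p$, each $b_k=a_{k-1}+\alpha a_k$ keeps that sign or vanishes, so new alternations can arise only at junctions, and each alternation of the product is charged injectively to a distinct sign change of $p$. This is sound and completable --- your argument essentially re-proves inline the P\'olya--Szeg\H{o} exercise that the paper cites --- but the bookkeeping you defer is genuinely where the work lies: when a junction coefficient, or an entire block of the product, vanishes, the charging between two surviving runs of signs $\epsilon_j$ and $\epsilon_{j'}$ separated by $j'-j$ junctions requires the counting-plus-parity observation that a word of at most $j'-j$ interposed nonzero signs, bounded by $\epsilon_j$ and $\epsilon_{j'}=(-1)^{j'-j}\epsilon_j$, has at most $j'-j$ alternations (the extremal value $j'-j+1$ being excluded by parity). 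What each buys: the paper gets a short proof at the price of a citation; you get a self-contained, transparent mechanism at the price of this case analysis. One caution: your fallback ``slicker'' route via $Z(q)=Z(p)$ and the parity refinement of Descartes' rule should be discarded not merely for circularity but because it cannot work --- equality of positive root counts together with evenness of $N-Z$ does not preclude $N$ increasing by $2$ under multiplication --- so your instinct to prefer the direct argument is the right one.
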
    

\begin{proof}
Expanding $p(x)(x+\alpha)$ in decreasing order of powers of the variable $x$, it has coefficients $(a_{p},a_{p-1}+a_{p}\alpha,\ldots, a_{0}+a_{1}\alpha, a_{0}\alpha)$. 
Since $\alpha>0$, we have
\begin{align*}
    N(a_{p},a_{p-1}+ & a_{p}\alpha,\ldots,a_1+a_2\alpha,a_{0}+a_{1}\alpha,a_{0}\alpha) \\
    &= N(a_{p}\alpha^p,(a_{p-1}+a_{p}\alpha)\alpha^{p-1},\ldots,(a_1+a_2\alpha)\alpha,(a_{0}+a_{1}\alpha), a_{0})\\
    &=N(a_{p}\alpha^p,a_{p-1}\alpha^{p-1}+a_{p}\alpha^p,\ldots,a_1\alpha+a_2\alpha^2,a_{0}+a_{1}\alpha, a_{0}).
\end{align*}
Now by \cite[Ex. 4]{polya1997problems}
, we get
\begin{align*}
N(a_{p}\alpha^p,a_{p-1}\alpha^{p-1}+a_{p}\alpha^p,\ldots,a_1\alpha+a_2\alpha^2,a_{0}+a_{1}\alpha, a_{0})&\leq N(a_p\alpha^p,a_{p-1}\alpha^{p-1},\ldots,a_2\alpha^2,a_1\alpha,a_0).
\end{align*}
Again, since $\alpha>0$,
the right hand side is equal to 
\begin{align*}
N(a_p, a_{p-1},\ldots, a_2, a_1, a_0).
\end{align*}
\noindent Hence, we have 
    \begin{align*} 
     N(p(x)(x+\alpha)) \leq N(p(x)).
    \end{align*} 
\end{proof}

\begin{lem} \label{Lemma: at most $n-1$ zeros}
Let $m$ and $p$ be fixed positive integers. Let $l_1, \ldots, l_p$ be positive integers,  and $c_1,\ldots, c_{p+1}$ be arbitrary real numbers. Suppose $\alpha_{kt}>0$ for all $1 \leq k\leq p, 1 \leq t\leq \textup{max}\{l_1,\ldots,l_p \}$. Let \begin{align}\label{eq: recurring function}
f_{1}(x) &= c_{1}\prod_{t=1}^{l_1}(x+\alpha_{1t})^{m}+c_{2}\nonumber\intertext{and for $2 \leq k \leq p$, let}
f_{k}(x) &= f_{k-1}(x)\prod_{t=1}^{l_{k}}(x+\alpha_{kt})^{m}+c_{k+1}.
\end{align} 
 If $c_1,\ldots,c_{p+1}$ are not all zero, then the polynomial $f_{p}(x)$ has at most $p$ positive zeros.
\end{lem}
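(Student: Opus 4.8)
The plan is to bound the number $N(f_p(x))$ of sign changes in the coefficient sequence of $f_p$ by $p$, and then invoke Descartes' rule of signs \eqref{drs} to conclude that $f_p$ has at most $p$ positive zeros. The whole argument is an induction on $k$ that tracks $N(f_k(x))$, using Lemma~\ref{not more than} to control the effect of the product factors and one elementary observation to control the effect of the added constants.

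First I would set $f_0(x)=c_1$, so that the recursion $f_k(x)=f_{k-1}(x)\prod_{t=1}^{l_k}(x+\alpha_{kt})^m+c_{k+1}$ holds uniformly for all $1\le k\le p$, with base case $N(f_0(x))=0$ (a constant has no sign changes). The inductive claim is $N(f_k(x))\le k$ for $0\le k\le p$. For the step, assuming $N(f_{k-1}(x))\le k-1$, the polynomial $f_{k-1}(x)\prod_{t=1}^{l_k}(x+\alpha_{kt})^m$ is obtained from $f_{k-1}(x)$ by multiplying by the $m\,l_k$ linear factors $(x+\alpha_{kt})$, each with $\alpha_{kt}>0$; applying Lemma~\ref{not more than} once per factor yields $N\bigl(f_{k-1}(x)\prod_{t=1}^{l_k}(x+\alpha_{kt})^m\bigr)\le N(f_{k-1}(x))\le k-1$.

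It remains to add the constant $c_{k+1}$. The key elementary fact is that adding a constant to a real polynomial changes only its constant coefficient, and so can create at most one new sign change in the coefficient sequence; hence $N(f_k(x))\le N\bigl(f_{k-1}(x)\prod_{t=1}^{l_k}(x+\alpha_{kt})^m\bigr)+1\le k$, completing the induction and giving $N(f_p(x))\le p$. Finally I would check that $f_p(x)$ is not the zero polynomial, so that \eqref{drs} applies: letting $j$ be the least index with $c_j\ne 0$ (which exists by hypothesis), one sees $f_{j-1}(x)=c_j$, and since multiplying by $\prod(x+\alpha_{kt})^m$ with $\alpha_{kt}>0$ preserves the sign of the leading coefficient while adding a constant cannot cancel a leading term of positive degree, the leading coefficient of $f_p(x)$ has the same sign as $c_j$. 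Thus $f_p$ is a nonzero polynomial with $N(f_p(x))\le p$, and \eqref{drs} gives $Z(f_p(x))\le N(f_p(x))\le p$.

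The main obstacle --- really the only place needing care --- is the bookkeeping around the constant term when adding $c_{k+1}$: one must verify the ``at most one new sign change'' claim in the boundary cases where the constant term of $f_{k-1}(x)\prod(x+\alpha_{kt})^m$ is already zero, or where adding $c_{k+1}$ makes the new constant term vanish. In each such case one is appending, modifying, or deleting the final entry of the sign sequence, which alters $N$ by at most one, so the bound survives; this is the detail I would write out most carefully.
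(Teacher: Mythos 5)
Your proposal is correct and follows essentially the same route as the paper's proof: an induction on $k$ showing $N(f_k(x))\le k$, using Lemma~\ref{not more than} to absorb the factors $(x+\alpha_{kt})$ and the observation that adding a constant increases the number of sign changes by at most one, followed by an appeal to Descartes' rule of signs \eqref{drs}, with nonvanishing of $f_p$ secured by identifying the leading coefficient with the first nonzero $c_q$. Your additions (the base case $f_0=c_1$ and the explicit treatment of the boundary cases for the constant term) are only presentational refinements of what the paper does implicitly.
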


\begin{proof}
Suppose $c_1,\ldots,c_{p+1}$ are not all zero. Let $q$ be the least index such that $c_q \neq 0$. Then the leading coefficient of $f_p(x)$ is $c_q$. So $f_p(x)$ can not be identically zero on $\R^+$. 

\par 

Next we show that if the polynomial $f_k(x)$ is written in decreasing order of powers of the  variable $x$, then $N(f_{k}(x))\leq k$ for every $1 \leq k \leq p$. We will prove this by applying induction on $k$. Since $\prod_{t=1}^{l_1}(x+\alpha_{1t})^{m}$ is a polynomial with positive coefficients, the result holds easily for the case when $k=1$. Let us assume that the result holds for $k=v-1\geq 1$. Since adding a constant term to a polynomial can increase the number of changes of signs by at most one, we have
\begin{align*}
          N(f_{v}(x)) &= N\left(f_{v-1}(x)\prod_{t=1}^{l_v}(x+\alpha_{vt})^{m}+c_{v+1}\right)\\
          &\leq N\left(f_{v-1}(x)\prod_{t=1}^{l_v}(x+\alpha_{vt})^{m}\right)+1.
\end{align*}
So by Lemma~\ref{not more than}, we get
\begin{align*}
N(f_{v}(x))\leq N(f_{v-1}(x))+1
\leq (v-1)+1=v.
\end{align*}
Hence, $N(f_{p}(x)) \leq p$. So by \eqref{drs}, we have $Z(f_{p}(x)) \leq N(f_{p}(x)) \leq p$, as desired.
\end{proof}

\hspace{-0.55cm}\textit{Proof of Theorem~\ref{res beta had pow is non singular}}.
Suppose for some $\la_i, \mu_j, 1\leq i,j \leq n$ and $m \in \N$, the matrix $\left[\frac{1}{(\Gamma(\lambda_{i}+ \mu_j))^{m}}\right]$ is singular. Then there exists a nonzero vector $c = (c_{1}, \ldots, c_{n}) \in \R^{n}$ such that  
\begin{align}\label{eqs: zeros equation}
\sum_{j=1}^{n}\frac{c_{j}}{{(\Gamma(\lambda_{i}+\mu_j))^{m}}} & = 0 \quad \text{for} \,\, i = 1, \ldots, n.
\end{align}
\noindent Let 
$f(x)= \sum_{j=1}^{n}\frac{c_{j}}{{(\Gamma(x+\mu_j))^{m}}}$. Taking $(\Gamma(x+\mu_n))^m$ as the LCM of the denominators, and using the property that $\Gamma(x+1) = x\Gamma(x)$ for all $x\in \R^+$, we get
\begin{align*}
f(x)=\frac{1}{(\Gamma(x+\mu_n))^{m}}\left[\left\{\sum_{j=1}^{n-1} c_{j} \prod_{k=0}^{\mu_n-\mu_j-1}(x+\mu_j+k)^m \right\} +c_n\right].
\end{align*}
\noindent Let
\begin{align*}
h_{1}(x) &= c_{1}\prod_{l=\mu_1}^{\mu_2-1}(x+l)^{m}+c_{2}\nonumber\intertext{and}
h_{k}(x) &= h_{k-1}(x)\prod_{l=\mu_k}^{\mu_{k+1}-1}(x+l)^{m}+c_{k+1}\,\,\textup{for}\,\, 2 \leq k \leq n-1.
\end{align*}
Then $f(x) = \frac{1}{(\Gamma(x+\mu_n))^{m}} h_{n-1}(x)$.
By Lemma~\ref{Lemma: at most $n-1$ zeros}, $h_{n-1}(x)$ has at most $(n-1)$ positive zeros. Hence, the function $f(x)$ can have at most $(n-1)$ positive zeros, which is a contradiction to \eqref{eqs: zeros equation}. Thus $\left[\frac{1}{(\Gamma(\lambda_{i}+ \mu_j))^{m}}\right]$ is non singular. Since $\beta(\la_i,\mu_j)=\frac{\Gamma (\la_i) \Gamma(\mu_j)}{\Gamma(\la_i+\mu_j)}$, we have $$\left[\beta(\la_i,\mu_j)^m\right]=\text{diag}\left[(\Gamma(\la_i))^m\right]\left[\frac{1}{(\Gamma(\la_i+\mu_j))^m}\right]\text{diag}\left[(\Gamma(\mu_i))^m\right].$$ Hence,  $\left[\beta(\la_i,\mu_j)^m\right]$ is non singular.

\qed

A matrix $A$ is called \emph{totally nonnegative} (or \emph{totally positive}) if all its minors are nonnegative (or positive)~\cite{fallat2011totally}. For $r>0$, the total positivity of $\left[\frac{1}{\beta(i,j)^r}\right]$ was shown in~\cite{grover2020positivity}. Some results for its bidiagonal decomposition were discussed in~\cite{grover2022bidiagonal}.
In \cite{grover2020positivity}, it was shown that if $0<\la_1<\cdots<\la_n$ and $0<\mu_1<\cdots<\mu_n$, then $\left[\frac{1}{\beta(\la_i,\mu_j)}\right]$  is totally positive. Further, in \cite{pena}, bidiagonal decompositions of the matrix $\left[\frac{1}{\beta(\la_i,\mu_j)}\right]$ were computed for all  $\mu_{i+1}-\mu_i=1$, $1\leq i\leq n-1$,. The below result is for total positivity of Hadamard powers of $\left[\frac{1}{\beta(\la_i,\mu_j)}\right]$.
\begin{theorem}
Let $\mu_{i+1}-\mu_i \in \N$ for $1\leq i\leq n-1$. For $m\in \mathbb N$, the matrices $[(\Gamma(\la_i+\mu_j))^m]$ and $\left[\frac{1}{\beta(\la_i,\mu_j)^m}\right]$ are totally positive.
\end{theorem}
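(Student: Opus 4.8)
The plan is to reduce the two assertions to a single total‑positivity statement and then prove that statement by writing the entries as moments and applying the continuous Cauchy--Binet (Andréief) identity. First I would dispose of the reduction. Since $\beta(\la_i,\mu_j)=\Gamma(\la_i)\Gamma(\mu_j)/\Gamma(\la_i+\mu_j)$, we have
\[
\left[\frac{1}{\beta(\la_i,\mu_j)^m}\right]=\diag\!\left[\frac{1}{\Gamma(\la_i)^m}\right]\cdot\bigl[(\Gamma(\la_i+\mu_j))^m\bigr]\cdot\diag\!\left[\frac{1}{\Gamma(\mu_j)^m}\right].
\]
Both diagonal factors have strictly positive entries, and multiplying on either side by a positive diagonal matrix scales every minor by a positive constant; hence total positivity of $\bigl[(\Gamma(\la_i+\mu_j))^m\bigr]$ is equivalent to that of $\bigl[1/\beta(\la_i,\mu_j)^m\bigr]$, and it suffices to treat the former.

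The key step is an integral representation of the entries. Writing $\Gamma(s)^m$ as an $m$-fold product of Euler integrals and setting $u=t_1\cdots t_m$ gives
\[
(\Gamma(s))^m=\int_0^\infty u^{\,s-1}\,g_m(u)\,du,
\]
where $g_m\ge 0$ is the probability density of the product of $m$ independent $\mathrm{Exp}(1)$ variables (a Meijer $G$-function), strictly positive on $(0,\infty)$. With $s=\la_i+\mu_j$ this reads
\[
(\Gamma(\la_i+\mu_j))^m=\int_0^\infty u^{\la_i}\,u^{\mu_j}\,d\sigma(u),\qquad d\sigma(u)=u^{-1}g_m(u)\,du\ge 0,
\]
so that $\bigl[(\Gamma(\la_i+\mu_j))^m\bigr]$ is a bimoment matrix for the positive measure $\sigma$ against the families $u^{\la_i}$ and $u^{\mu_j}$. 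By Andréief's identity, the minor on rows $i_1<\cdots<i_k$ and columns $j_1<\cdots<j_k$ equals
\[
\frac{1}{k!}\int_{(0,\infty)^k}\det\!\bigl[u_c^{\la_{i_a}}\bigr]_{a,c}\,\det\!\bigl[u_c^{\mu_{j_b}}\bigr]_{b,c}\,\prod_{c=1}^k d\sigma(u_c),
\]
and absolute convergence is immediate since each resulting monomial integrates to a finite product of values $\Gamma(\la_i+\mu_j)^m$.

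It then remains to sign the integrand. The kernel $(u,\rho)\mapsto u^{\rho}=e^{\rho\ln u}$ is strictly totally positive on $(0,\infty)\times\R$, so for strictly increasing exponents each generalized Vandermonde determinant $\det[u_c^{\la_{i_a}}]$ and $\det[u_c^{\mu_{j_b}}]$ is positive on the chamber $u_1<\cdots<u_k$, and, being alternating in the $u_c$, both acquire the same sign of the permutation on every other chamber. Their product is therefore $\ge 0$ everywhere and $>0$ off the set $\{u_a=u_b\}$; as $\sigma$ has a strictly positive density, the integral is strictly positive. Hence every minor of $\bigl[(\Gamma(\la_i+\mu_j))^m\bigr]$ is positive, which is the desired total positivity, and the statement for $\bigl[1/\beta(\la_i,\mu_j)^m\bigr]$ follows from the reduction above.

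I expect the main obstacle to be purely technical: verifying that $g_m$ is a genuine nonnegative density with full support (immediate once it is recognised as the law of a product of exponentials) and checking the convergence needed for Andréief; everything else rests on the classical strict total positivity of the exponential kernel. Two remarks are in order. This argument never uses the hypothesis $\mu_{i+1}-\mu_i\in\N$, so it in fact establishes total positivity for \emph{all} $0<\la_1<\cdots<\la_n$ and $0<\mu_1<\cdots<\mu_n$, strengthening the stated result. An alternative, hypothesis‑respecting route would instead build on Theorem~\ref{res beta had pow is non singular}: every $k\times k$ submatrix of $\bigl[(\Gamma(\la_i+\mu_j))^m\bigr]$ is again of the same form, with the gap condition inherited because each new gap is a sum of original integer gaps, so all minors are nonzero; one would then fix their signs by a connectedness argument over the parameter domain with the gaps held fixed. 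The moment method above is preferable precisely because it yields the correct sign automatically.
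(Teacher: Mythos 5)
Your proof is correct, but it follows a genuinely different route from the paper's. The paper obtains total nonnegativity of $[(\Gamma(\la_i+\mu_j))^m]$ by citing Lemma~4.8 of \cite{khare2020multiply} as a black box, transfers it to $\left[\frac{1}{\beta(\la_i,\mu_j)^m}\right]$ by the same diagonal scaling you use, and then upgrades nonnegativity to strict positivity via the integer-gap hypothesis: every submatrix of $[(\Gamma(\la_i+\mu_j))^m]$ is of the same form with gaps that are again positive integers, so the Descartes'-rule argument of Theorem~\ref{res beta had pow is non singular} shows every minor is nonzero, and a nonzero nonnegative minor is positive. Your moment representation $\Gamma(s)^m=\int_0^\infty u^{s-1}g_m(u)\,du$, combined with Andr\'eief's identity and the strict total positivity of the exponential kernel, proves strict positivity of all minors in one stroke; in effect you have given a self-contained proof of the special case $f(t)=t^m$ of the cited lemma instead of invoking it. The payoff is real: your argument never uses $\mu_{i+1}-\mu_i\in\N$, so it establishes the stronger statement that both matrices are totally positive for \emph{all} $0<\la_1<\cdots<\la_n$ and $0<\mu_1<\cdots<\mu_n$ (consistent with the $m=1$ case proved in \cite{grover2020positivity}), whereas the paper's route genuinely needs the hypothesis, since its nonsingularity mechanism rests on the ratios $\Gamma(x+\mu_j)^m/\Gamma(x+\mu_1)^m$ being polynomials. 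One small correction to your closing remark: in the hypothesis-respecting alternative, no connectedness argument is needed to fix the signs of the minors — the paper simply combines their nonvanishing with the already-established total nonnegativity.
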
 
\begin{proof}
By Lemma $4.8$ of \cite{khare2020multiply}, the matrix $[f(\Gamma(\la_i+\mu_j))]$ is totally nonnegative for every power series $f$ with nonnegative coefficients.
In particular, for $m\in \mathbb N$, the matrix $[(\Gamma(\la_i+\mu_j))^m]$ is totally nonnegative. 
We have 
\begin{align}\label{beta gamma relation}
\left[\frac{1}{\beta(\la_i,\mu_j)^m}\right]=\text{diag}\left[\frac{1}{(\Gamma(\la_i))^m}\right]\left[(\Gamma(\la_i+\mu_j))^m\right]\text{diag}\left[\frac{1}{(\Gamma(\mu_i))^m}\right].
\end{align}
The product of a totally nonnegative (or totally positive) matrix with a diagonal matrix with positive diagonal entries is also totally nonnegative (or totally positive) (see~\cite[p. 34]{fallat2011totally}). So, by~\eqref{beta gamma relation}, the matrix $\left[\frac{1}{\beta(\la_i,\mu_j)^m}\right]$ is also totally nonnegative. 
Further, let $\mu_{i+1}-\mu_i \in \N$ for $1\leq i\leq n-1$. By the same approach as used in the proof of Theorem~\ref{res beta had pow is non singular}, we can prove that the matrix $[(\Gamma(\la_i+\mu_j))^m]$ 
is non singular. Every submatrix of $[(\Gamma(\la_i+\mu_j))^m]$ is of the same form. Thus, we have that every minor of $[(\Gamma(\la_i+\mu_j))^m]$ is positive, that is, the matrix $[(\Gamma(\la_i+\mu_j))^m]$ is totally positive. Hence, by~\eqref{beta gamma relation}, the matrix $\left[\frac{1}{\beta(\la_i,\mu_j)^m}\right]$ is also totally positive.
\end{proof}

\section{Remarks}\label{remarks are here}

\begin{remark}
From~\eqref{decomp} and~\eqref{new equation}, the inverse of the matrix $\left[{\beta(i,j)}\right]$ has an LU decomposition where the $(i,j)$th entries of $L$ and $U$ are given by
   \begin{align*}
    L_{ij}&= \begin{cases}
    n!\binom{n-j}{n-i}\binom{n+i-1}{i-1}(-1)^{n+i+j} \quad &\text{if}\,\, i\geq j\\
    0 &\text{otherwise},
    \end{cases}\intertext{and}
    U_{ij}&= \begin{cases}
    \frac{1}{n!}\binom{n+j-1}{n+i-1}\binom{n}{j}j(-1)^{j} \quad &\text{if} \,\, i \leq j\\
    0 &\text{otherwise},
    \end{cases}
\end{align*} 
respectively.
\end{remark}

\begin{remark}
Let $0\leq i,j \leq n-1$.  Let $\mathcal{L} = \left[\binom{2i}{i+j}\right]$ be the lower triangular matrix with diagonal entries $1$, and let $G = \emph{diag}\left[\binom{2i}{i}\right]$. Let $D$ be the diagonal matrix, where $D_{0,0} =1$, and $D_{ii}=(-1)^{i}2$ for $0<i\leq n-1$. By Equation $(9)$ of \cite{richardson2014reciprocal}, $\mathcal{P}^{\circ(-1)} = G^{-1}\mathcal{L}D\mathcal{L}^TG^{-1}$. Since, $G$ and $\mathcal{L}$ have positive determinants, the sign of $\det \left(\mathcal{P}^{\circ (-1)}\right)$ is the same as that of  $\det D$, which is  
$(-1)^{n(n-1)/2}$. In Theorem~3.1 of~\cite{richardson2014reciprocal}, this was incorrectly calculated as $(-1)^{n(n+1)/2}$. Let $\mathcal{P}_n^{\circ (-1)}$ denote the Hadamard inverse of the Pascal matrix of order $n$. Then the sign of the product $(\det(\mathcal{P}_n^{\circ (-1)})\cdot\det(\mathcal{P}_{n+1}^{\circ (-1)}))$ is $(-1)^{n^2}$, which is positive if $n$ is even, and negative if $n$ is odd. Now, using a similar argument as in the proof of Theorem \ref{inertia of beta had powers}, we get
\begin{eqnarray} 
\textup{Inertia}\left(\mathcal{\mathcal{P}}^{\circ (-1)}\right) = \begin{cases}
     \left(\dfrac{n}{2},0,\dfrac{n}{2}\right) \,\,  & \text{if} \,\, n \,\, \textup{is even},\\[0.5 cm]
     \left(\dfrac{n+1}{2},0,\dfrac{n-1}{2}\right) \,\, & \text{if} \,\, n \,\, \textup{is odd}.
    \end{cases}\label{pascalinertia}
   \end{eqnarray}
   Thus, the $n \times n$ Pascal matrix is Birkhoff-James orthogonal to the $n\times n$ identity matrix $I$ in the trace norm if and only if $n$ is even.
\end{remark}

\textbf{Acknowledgement}. It is a pleasure to record our sincere thanks to the  anonymous referees for their thorough review and valuable remarks.

\end{document}